\newtheorem{thm}{Theorem}
\newtheorem{lem}[thm]{Lemma}
\newtheorem{cor}[thm]{Corollary}
\newtheorem{prop}[thm]{Proposition}
\newtheorem{defn}[thm]{Definition}
\renewcommand{\Re}{\mathbb{R}}
\newcommand{\dd}{\delta_{\mathbf{diam}}}
\renewcommand{\Re}{{\mathbb{R}}}
\author{David Bryant\addressmark{1}\thanks{Email: \email{david.bryant@otago.ac.nz}}
\and Paul F. Tupper\addressmark{2}\thanks{Email: \email{pft3@math.sfu.ca}}}
\title{Constant distortion embeddings of Symmetric Diversities}
\address{\addressmark{1}Dept.\ of Mathematics and Statistics, University of Otago, Dunedin 9054, New Zealand.\\
\addressmark{2}Dept.\ of Mathematics, Simon Fraser University. 8888 University Drive, Burnaby, British Columbia V5A 1S6, Canada.}
\keywords{Diversities, Metric embedding, L1 embedding, Hypergraphs}
\newcommand{\Pf}{\mathcal{P}_{\hspace{-0.2mm}\mathrm{fin}}   }
\newcommand{\hdelta}{{\widehat{\delta}}}
\newcommand{\diam}{\mathrm{diam}}
\begin{document}
\maketitle

\begin{abstract}
Diversities are like metric spaces, except that every finite subset, instead of just every pair of points, is assigned a value. Just as there is a theory of  minimal distortion embeddings of finite metric spaces into $L_1$, there is a similar, yet undeveloped, theory for embedding finite diversities into the diversity analogue of $L_1$ spaces. In the metric case, it is well known that an $n$-point metric space can be embedded into $L_1$ with  $\mathcal{O}(\log n)$ distortion. For diversities, the optimal distortion is unknown. Here, we establish the surprising result that \emph{symmetric} diversities, those in which the diversity (value) assigned to a set depends only on its cardinality, can be embedded in $L_1$ with constant distortion.
\end{abstract}

\section{Introduction}

Diversities are an extension of the concept of a metric space in which a non-negative value is assigned to every finite set of points, instead of just to pairs. 
 Formally, a {\em diversity} is a pair $(X,\delta)$ where $X$ is a set and $\delta$ is a function from the finite subsets of $X$ to $\Re$ satisfying  
\begin{quotation}
\noindent (D1) $\delta(A) \geq 0$, and $\delta(A) = 0$ if 
and only if 
$|A|\leq 1$. \\~\\
(D2) If $B \neq \emptyset$ then $\delta(A\cup B) + \delta(B \cup C) \geq \delta(A \cup C)$,
\end{quotation}
for all finite $A, B, C \subseteq X$. There is a clear correspondence between (D1), (D2) and the axioms for a metric space. In fact, if we define $d(a,b) = \delta(\{a,b\})$ for all $a,b$ we have that $(X,d)$ is a metric space, called the {\em induced metric} for $(X,\delta)$ \cite{BryantETAL2012a}. 

\begin{figure}[hbt]
\centerline{\includegraphics[width=0.9\textwidth]{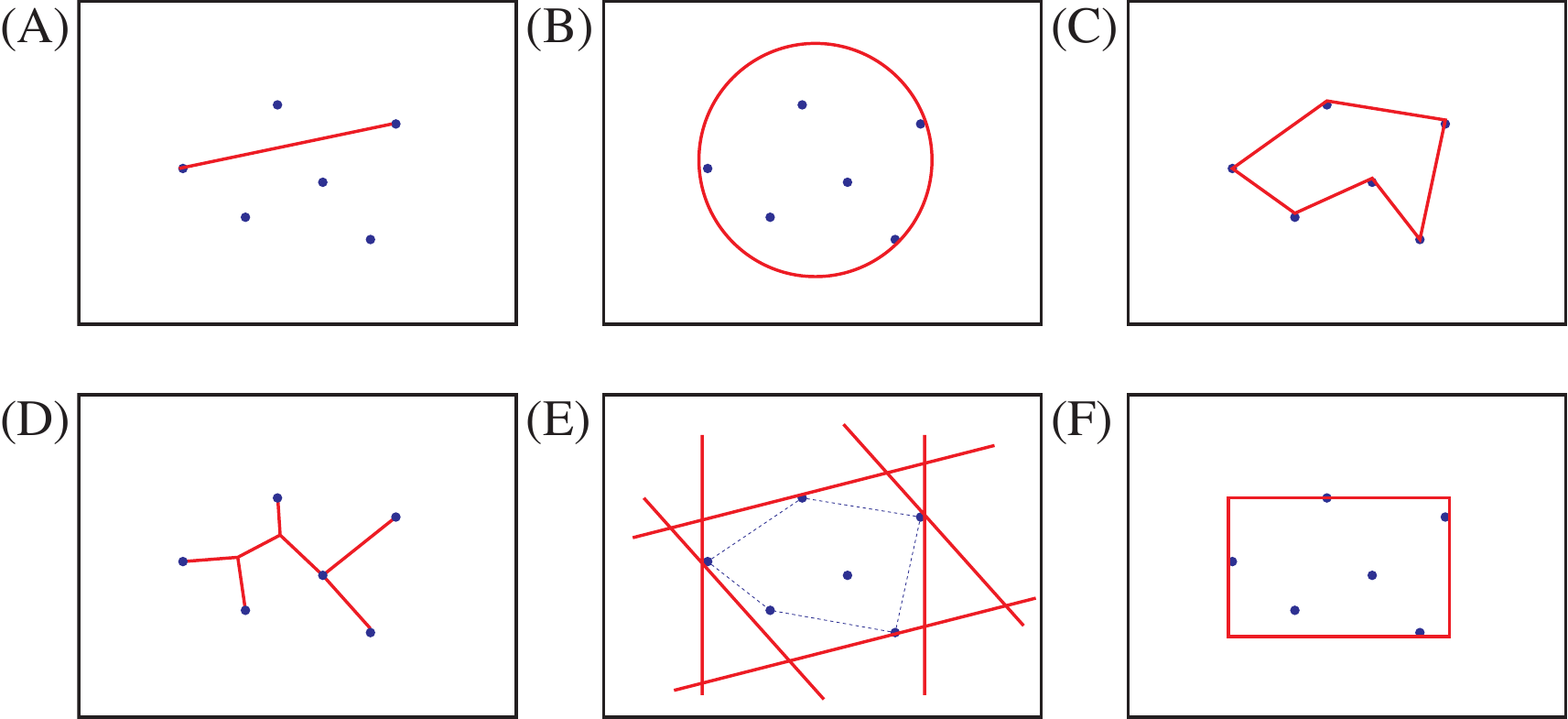}}
  \caption{\label{fig:examples}Six examples of diversities. Points indicate a finite subset $A$ of $R^n$. The diameter diversity (A) equals the maximum distance between points in $A$; The ball diversity (B) equals the diameter of the smallest ball enclosing $A$; The TSP diversity (C) equals half the length of the shortest tour through $A$; The Steiner diversity (D) equals the length of a minimal Steiner tree for $A$; The mean width diversity equals the scaled mean width of the convex closure of $A$; The $L_1$ diversity (E) equals the sum of the dimensions of the smallest axis-aligned box enclosing $A$.}
\end{figure}

Figure~\ref{fig:examples} illustrates several examples of diversities defined on $X = \Re^n$. The simplest is the {\em diameter diversity}. For each finite subset $A \subseteq X$, 
\[
\dd (A) = \max_{a,b \in A} \|a-b\|_2 = \diam(A).
\]
The {\em ball diversity}  equals the diameter of the smallest ball containing $A$. 
The {\em TSP diversity} equals half the length of the shortest tour visiting all the points in $A$. The {\em Steiner diversity} equals the length of the shortest Steiner tree connecting points in $A$. The {\em mean-width diversity} equals the (scaled) mean-width of the convex hull of $A$:
\[\delta_w(A) = \frac{\pi}{\mu_n(S^{n})} \int_{S^{n-1}} \left(\max_{a \in A} a \cdot u  - \min_{a \in A} a \cdot u \right) \, d\mu_{n-1}(u),\]
where $\mu_{n-1}$ denotes the surface measure on the unit sphere $S^{n-1}$ \cite{Taylor06}.
All of these diversities have the Euclidean metric as their induced metric. This is not the case, for example, for the {\em $\ell_1$ diversity}, defined on $\Re^n$ by 
\[\delta_1(A) = \sum_{i=1}^n \max\{|a_i - b_i|:a,b \in A\}.\]
The induced metric for the $\ell_1$ diversity is the $\ell_1$ metric. 
See \cite{BryantETAL2012a,BryantETAL2013a}, and below, for further examples of diversities.

 Diversities were first introduced by \cite{BryantETAL2012a} and it quickly became apparent that 
the concept leads to a rich and useful new area of mathematical theory and applications. 
Remarkable analogues have arisen between the non-linear analysis of metric spaces and diversity theory \cite{KirkETAL2014a,Piatek2014a,EspinolaETAL2014a} with a new and more general fixed point theorem for non-expansive maps being established by Esp\'inola and Pi\c{a}tek \cite{espinolaPiatek,KirkETAL2014a}. Diversity theory has led to new work in topology \cite{Poelstra2013b} and model theory 
\cite{BryantETAL2015a}. Diversities have also arisen in application areas ranging from evolutionary biology \cite{Steel2014a} to image recognition \cite{KumarETAL2015a}.

One part of the theory of metric spaces which has had a major impact on combinatorial optimization is low distortion embeddings of finite metrics. Linial et al.\ \cite{Linial95} showed how to use the mathematics of metric embeddings to help solve difficult problems in combinatorial optimization. The approach  inspired a large body of further work on metric embeddings and  their applications. In an earlier paper \cite{BryantETAL2013a} we showed that much of this theory translated over to diversities, promising an even larger toolbox of algorithmic techniques. Just as metric embedding provides a geometry of graphs \cite{Linial95},  diversity embeddings provides a geometry of  hypergraphs \cite{BryantETAL2013a}.

Many open problems remain. In particular, it is not known whether there exists a diversity version of Bourgain's celebrated embedding theorem: for every finite metric $(X,d)$, $|X| = n$, there is an embedding $\phi$ into $\ell_1$ metric space $(\Re^k,d_1)$ such that 
\[d(x,y) \leq d_1(\phi(x),\phi(y)) \leq D \cdot d(x,y),\]
where $k$ and $D$ are both $O(\log n)$. The minimum value $D$ for which these bounds hold is called the {\em distortion} of the embedding. It was shown in \cite{BryantETAL2013a} that there is no analogue of Bourgain's theorem satisfying both the distortion bound and the dimension bound simultaneously. The same paper gives polylogarithmic distortion bounds for embeddings from a wide range of diversities into $\ell_1$, though no general upper bound is known. See below for a formal presentation of this problem.

The current paper was motivated by a search for provable {\em lower} bounds for diversity embeddings. Many of the existing distortion bounds have been for diversities defined directly from the induced metric, such as the Steiner diversity or diameter diversity. We investigate the class of {\em symmetric diversities} which are not determined by their induced metrics and yet are straightforward to characterize. A diversity $(X,\delta)$ is symmetric if  it assigns the same value to sets of the same cardinality. These diversities are analogous to {\em equilateral sets} in metric geometry \cite{AlonETAL2003a,Matousek2010a}. Our main result is almost the complete opposite of what was expected: we show (Theorem~\ref{thm:EmbedSymm}) that symmetric diversities can be embedded in $\ell_1$ with {\em constant} distortion (albeit with large dimension). Rather than a source of lower bounds, these class might prove to be a building block for general upper bounds.


\section{Characterization of Symmetric Diversities}

\begin{defn}
Let $\Pf(X)$ denote the set of finite subsets of $X$. 	A diversity $(X,\delta)$ is {\em symmetric} if $\delta(A) = \delta(A')$ for all $A,A' \in \Pf(X)$ such that $|A| = |A'|$.
\end{defn}
A direct consequence of this definition is that symmetric diversities are determined by the values assigned to sets of each cardinality. Our first observation is that symmetric diversities correspond to (non-decreasing) sub-additive functions.
\begin{prop} \label{symAdditive}
Suppose that $(X,\delta)$ satisfies $\delta(\emptyset)=0$ and $\delta(A) = f(|A|-1)$ for all nonempty $A \in \Pf(X)$, where $f$ is some real-valued function on the positive integers. Then $(X,\delta)$ is a diversity if and only if
\begin{enumerate}
\item[(S1)] $f(0)=0$ and $f(j)>0$ for all $j>0$.
\item[(S2)] $f$ is non-decreasing.
\item[(S3)] $f(j + k ) \leq f(j) + f(k)$ for all $j,k \geq 0$, $j+k < |X|$.
\end{enumerate}
\end{prop}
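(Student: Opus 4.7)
The plan is to prove each direction separately. In the forward direction I need to extract (S1)--(S3) from (D1), (D2) together with the symmetry hypothesis; in the backward direction I need to reconstruct (D2) from the three discrete conditions on $f$ via an arithmetic decomposition.

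\textbf{Forward direction.} Assume $(X,\delta)$ is a diversity. Condition (S1) reads off (D1): $f(0) = \delta(\{x\}) = 0$ for any singleton, and $f(j) = \delta(A) > 0$ for any $A$ with $|A| = j+1 \geq 2$. For (S2) I first note that every diversity is set-monotone. Applying (D2) to the triple $(A, \{b\}, \emptyset)$ for an arbitrary $b \in X$ gives $\delta(A \cup \{b\}) + \delta(\{b\}) \geq \delta(A)$, and since $\delta(\{b\}) = 0$ this says $\delta(A \cup \{b\}) \geq \delta(A)$. Iterating adds one element at a time, so $A \subseteq B$ implies $\delta(A) \leq \delta(B)$; symmetry then forces $f$ to be non-decreasing on $\{0, 1, \ldots, |X|-1\}$. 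For (S3), when $j + k < |X|$ I pick sets $B, C \subseteq X$ with $|B| = j+1$, $|C| = k+1$, and $|B \cap C| = 1$, and apply (D2) to $(B, B \cap C, C)$ to obtain $f(j) + f(k) = \delta(B) + \delta(C) \geq \delta(B \cup C) = f(j+k)$.

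\textbf{Backward direction.} Assume (S1)--(S3). Axiom (D1) is immediate from (S1) together with the convention $\delta(\emptyset) = 0$. For (D2), let $A, B, C \subseteq X$ be finite with $B \neq \emptyset$ and set $p = |A \cup B|-1$, $q = |B \cup C|-1$, $r = |A \cup C|-1$. Since $(A \cup B) \cap (B \cup C) \supseteq B$ and $|B| \geq 1$, inclusion-exclusion gives
\[ r + 1 \;\leq\; |A \cup B \cup C| \;\leq\; |A \cup B| + |B \cup C| - |B| \;\leq\; p + q + 1, \]
so $r \leq p + q$. Choose non-negative integers $j \leq p$ and $k \leq q$ with $j + k = r$; such a choice exists precisely because $r \leq p + q$. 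Since $r \leq |X|-1$ we have $j + k < |X|$, so (S3) yields $f(r) \leq f(j) + f(k)$, and (S2) yields $f(j) + f(k) \leq f(p) + f(q)$. Chaining these two estimates gives $\delta(A \cup C) \leq \delta(A \cup B) + \delta(B \cup C)$, which is (D2).

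\textbf{Main obstacle.} The subtle point is that (S3) is only asserted for $j + k < |X|$, so it cannot be applied directly to the pair $(p, q)$ arising in (D2) whenever $p + q$ happens to exceed $|X|-1$. The detour --- first bound the target cardinality $r$ by $p + q$, then split $r = j + k$ with $j \leq p$ and $k \leq q$, apply (S3) to $(j,k)$, and use (S2) to raise the bound back to $(p,q)$ --- is what resolves this, and is the step I expect to require the most care. The forward direction is mostly bookkeeping; the only non-routine ingredient there is the implicit derivation of set-monotonicity of $\delta$ from (D1) and (D2).
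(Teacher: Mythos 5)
Your proof is correct and takes essentially the same route as the paper's: the same one-point-overlap construction of sets of sizes $j+1$ and $k+1$ for (S3), and the same Venn-diagram cardinality bound $|A\cup C| \leq |A\cup B| + |B\cup C| - 1$ combined with (S2) and (S3) for (D2). The only difference is that you explicitly split $r = j + k$ with $j \leq p$, $k \leq q$ so that (S3) is applied within its stated range $j+k < |X|$ — a detail the paper's proof elides — so this is added care rather than a different approach.
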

\begin{proof}
Suppose $f$ satisfies (S1), (S2), (S3). Then $\delta(A) = 0$ when $|A| \leq 1$ and $\delta(A)>0$ when $|A| > 1$, by (S1). Suppose that $B \neq \emptyset$. From a Venn diagram, we have
\[|A \cup C|  \leq |A \cup B| + |B \cup C| - |B| \leq |A \cup B| + |B \cup C| - 1,\]
so by (S2) and (S3) we obtain
\begin{align*}
\delta(A \cup C) & =  f(|A \cup C| - 1) \\
&\leq  f(|A \cup B| - 1) + f(|B \cup C| - 1) \\
& =  \delta(A \cup B) + \delta(B \cup C).
\end{align*}
For the converse,  suppose that $(X,\delta)$ is a diversity. (S1), (S2) follow immediately. For (S3), let $A$ be a set of size $j+1$ and $B$ be a set of size $k+1$ such that $A$ and $B$ intersect in one point. Then $|A \cup B|= j+k+1$ and
\[
f(j+k) = \delta( | A \cup B| ) \leq \delta( A ) + \delta(B) = f(j) + f(k).
\]
\end{proof}
	
Our interest lies in embedding symmetric diversities, and embedding into symmetric diversities. We  define embeddings and distortion for diversities in the same way as for metric spaces. 

\begin{defn}\cite{BryantETAL2013a}
	Let $(X_1,\delta_1)$ and $(X_2,\delta_2)$ be two diversities and suppose $c \geq 1$. A map $\phi:X_1 \rightarrow X_2$ has {\em distortion} $c$ if there is $c_1,c_2 > 0$ such that $c = c_1 c_2$ and
\[\frac{1}{c_1} \delta_1(A) \leq \delta_2(\phi(A)) \leq c_2 \delta_1(A)\]
for all finite $A \subseteq X_1$. We say that $\phi$ is an {\em isomorphic embedding} if it has distortion $1$ and an {\em approximate embedding} otherwise. 
\end{defn}

We obtain tight bounds for embedding a general diversity into a symmetric diversity.
\begin{defn}
	Let $(X,\delta)$ be a diversity.
	 Define the {\em skewness} of $(X,\delta)$ by
	\[\gamma = \max\left\{\frac{\delta(A)}{\delta(B)} : |A| = |B| > 1 \right\}.\]
\end{defn}

\begin{prop} \label{skew}
	Let $(X,\delta)$ be a finite diversity with skewness $\gamma$. Then there is a embedding of $(X,\delta)$ into a symmetric diversity with distortion $\gamma$ and this bound is tight. 
\end{prop}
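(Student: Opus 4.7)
The plan is to build an explicit symmetric diversity on $X$ itself, take $\phi$ to be the identity, and separately exhibit a small example showing the bound $\gamma$ cannot be improved.

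For the upper bound, define
\[
M(k) = \max\{\delta(B) : B \subseteq X,\ |B| = k+1\}, \qquad k = 0, 1, \ldots, |X|-1,
\]
and set $\delta'(A) = M(|A|-1)$ for nonempty $A$, $\delta'(\emptyset) = 0$. I would check that $(X,\delta')$ is a diversity via Proposition~\ref{symAdditive}. Condition (S1) is immediate from (D1). For the monotonicity of $\delta$ needed for (S2), applying (D2) with $C = \emptyset$ and middle set a singleton $\{b\}$ yields $\delta(A) \leq \delta(A \cup \{b\})$, so any maximizer at size $k+1$ extends to a set of size $k+2$ of at least the same diversity, making $M$ non-decreasing. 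For (S3), given $A$ of size $j+k+1$ with $\delta(A) = M(j+k)$, I would split $A = A_1 \cup A_2$ with $|A_1| = j+1$, $|A_2| = k+1$ and $|A_1 \cap A_2| = 1$; one application of (D2) with middle set the common point gives $M(j+k) \leq \delta(A_1) + \delta(A_2) \leq M(j) + M(k)$. By the definitions of $M$ and of $\gamma$, for every nonempty $A$,
\[
\delta(A) \;\leq\; M(|A|-1) \;=\; \delta'(A) \;\leq\; \gamma \cdot \min_{|B|=|A|}\delta(B) \;\leq\; \gamma \cdot \delta(A),
\]
so the identity map has distortion $1 \cdot \gamma = \gamma$.

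For tightness, I would exhibit, for each $\gamma \geq 1$, a diversity with skewness $\gamma$ whose best embedding into any symmetric diversity has distortion at least $\gamma$. Take $X = \{a, b_1, b_2\}$, set $\delta(\{b_1, b_2\}) = 1$, and set $\delta(S) = \gamma$ for every other $S$ with $|S| \geq 2$. The skewness is plainly $\gamma$, and (D2) follows from a short case analysis on the values $\delta(A \cup B), \delta(B \cup C), \delta(A \cup C) \in \{0, 1, \gamma\}$; the only potentially bad case is ruled out by the observation that $\emptyset \neq B \subseteq (A \cup B) \cap (B \cup C)$ forces at least one of $A \cup B, B \cup C$ to contain $a$ whenever $A \cup C$ does. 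Given any embedding $\phi: X \to (Y, \delta')$ into a symmetric diversity, finite distortion forces $\phi$ to be injective (otherwise the $\delta'$-value on some image pair is $0$ while the source $\delta$-value is positive). Then $\phi(\{a, b_1\})$ and $\phi(\{b_1, b_2\})$ are both two-element subsets of $Y$, so by symmetry they share a common value $v > 0$. The distortion inequalities then read $\gamma/c_1 \leq v \leq c_2$, which force $c_1 c_2 \geq \gamma$.

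The main technical obstacle is the (D2) case analysis for the three-point tightness example; everything else reduces to one (D2) application per property (S1), (S2), (S3) and then direct definitional inequalities.
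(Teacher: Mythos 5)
Your upper bound is essentially the paper's argument verbatim: the same construction $M(k)=\max\{\delta(B):|B|=k+1\}$, the same single application of (D2) through a shared point to get subadditivity, and the same chain $\delta(A)\leq M(|A|-1)\leq \gamma\,\delta(A)$ via the definition of skewness. The difference is in the tightness claim, and here your version proves something weaker than the paper does. The proposition is about a \emph{given} $(X,\delta)$, so ``this bound is tight'' means that \emph{this particular} diversity admits no embedding into a symmetric diversity with distortion below its skewness $\gamma$; your argument only exhibits, for each value of $\gamma$, \emph{some} diversity for which $\gamma$ is unavoidable. The paper's proof of the stronger statement uses exactly the two observations you make at the end of your example analysis --- injectivity is forced by finite distortion, and a symmetric target assigns a common value $v$ to the equal-cardinality images of any pair $A,B$ witnessing $\gamma=\delta(A)/\delta(B)$ --- applied directly to the original $(X,\delta)$: then $\delta(A)/c_1\leq v\leq c_2\,\delta(B)$ gives $c_1c_2\geq\gamma$. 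So your three-point construction and its (D2) case analysis are unnecessary; deleting them and running your final paragraph on a skewness-achieving pair in $(X,\delta)$ itself recovers the paper's proof. (Your example is in fact a valid diversity, but the ``key observation'' you cite for (D2) only guarantees that one of $A\cup B$, $B\cup C$ contains $a$; you also need it to contain one of $b_1,b_2$ to conclude its value is $\gamma$, so the case analysis is a bit longer than your sketch suggests.)
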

\begin{proof}
For each $k=0,1,\ldots,|X|-1$ define 
\[f(k) = \max\{\delta(A):A \subseteq X \mbox{ and } |A| = k+1 \}.\]
Then $f(0)= 0$ and $f$ is non-decreasing. Suppose that $0\leq j,k$ and $j+k<|X|$. There is $Y \subseteq X$ such that $|Y| = j+k+1$ and $f(j+k) = \delta(Y)$. Let $J$ and $K$ be disjoint subsets of $Y$ of cardinality $j$ and $k$, and let $y$ be the unique element in $Y - (J \cup K)$. Then 
\[f(j+k) = \delta(Y) \leq \delta(J \cup \{y\}) + \delta(K \cup \{y\}) \leq f(j) + f(k).\]
By Proposition~\ref{symAdditive}, $(X,\hdelta)$ defined by $\hdelta(A) = f(|A| - 1)$ is a symmetric diversity. Furthermore, for all $A \in \Pf(X)$ with $|A|>1$ we have 
\[\delta(A) \leq f(|A|-1) = \hdelta(A) \leq \gamma \delta(A).\]

To show that there is no embedding $\phi$ into a symmetric diversity $\delta_s$ with smaller distortion, consider two $A,B \in \Pf(X)$ such that $\gamma = \delta(A)/\delta(B)$ and $|A| = |B|$. 
Let $\delta_s$ be a symmetric diversity and let $c_1$, $c_2$ be constants such that
\[\frac{1}{c_1} \delta(A) \leq \delta_s(A) \leq c_2 \delta(A) \ \ \ \ 
\mbox{   and   } \ \ \ \
\frac{1}{c_1} \delta(B) \leq \delta_s(B) \leq c_2 \delta(B).\]
Since $\delta_s(A)=\delta_s(B)$ we get
\[
c_1 c_2 \geq \frac{\delta(A)}{\delta_s(A)} \frac{\delta_s(B)}{\delta(B)} = \gamma,
\]
as required.
\end{proof}

To conclude this section we derive a finite set of basis functions which can be used to approximate symmetric diversities up to a constant distortion. Recall that a function $f$ defined on some subset of the integers is concave if for all $a \leq k \leq b$ where $f$ is defined we have
\[
f(k) \geq  \frac{b-k}{b-a} f(a) + \frac{k-a}{b-a} f(b) .
\]
Not all functions $f$ coming from a symmetric diversity as in Proposition~\ref{symAdditive} are concave on $0, \ldots, |X|-1$. As an example, let $f(0)=0$, $f(1)=f(2)=1$, $f(k)=2$ for $k> 2$. Then $f$ satisfies S1--S3 of Proposition~\ref{symAdditive}, but $f(2) < ( f(1) + f(3) )/2$, violating concavity. However, every such $f$ can be approximated up to a factor of $2$.

\begin{lem}\label{symConcave} 
Suppose that $(X,\delta)$ is a finite symmetric diversity with $|X|=n$, where $\delta(A) = f(|A| - 1)$ for all nonempty $A\in \Pf(X)$. Then there is a concave and non-decreasing function $g$ such that $f(k) \leq g(k) \leq 2 f(k)$ for $k=0,1, \ldots, n-1$.	
\end{lem}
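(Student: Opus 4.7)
The plan is to define $g$ as the piecewise-linear upper concave envelope of the points $(0,f(0)),(1,f(1)),\ldots,(n-1,f(n-1))$. Concretely, compute the upper convex hull of these $n$ points; this produces hull vertices $0=k_0<k_1<\cdots<k_r=n-1$ whose $y$-values equal $f$, and $g$ is defined by linear interpolation between consecutive vertices. By construction $g$ is concave and satisfies $g(k)\geq f(k)$ for every $k\in\{0,\ldots,n-1\}$, so only the monotonicity of $g$ and the upper bound $g(k)\leq 2f(k)$ remain to be verified.

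Monotonicity is immediate: a concave piecewise-linear function has non-increasing slopes, so the minimum slope occurs on the last segment and equals $(f(n-1)-f(k_{r-1}))/(n-1-k_{r-1})$, which is $\geq 0$ since $f$ is non-decreasing by (S2). Hence every slope is non-negative and $g$ is non-decreasing.

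For the upper bound, if $k$ is a hull vertex then $g(k)=f(k)$. Otherwise $k$ lies strictly between adjacent hull vertices $a<k<b$, so
\[
g(k)=\frac{b-k}{b-a}f(a)+\frac{k-a}{b-a}f(b).
\]
Monotonicity gives $f(a)\leq f(k)$; iterating sub-additivity (S3) in the form $f(qk+r)\leq qf(k)+f(r)$ with $f(r)\leq f(k)$ yields $f(b)\leq\lceil b/k\rceil f(k)\leq(1+b/k)f(k)$. Substituting and simplifying,
\[
g(k)\leq\frac{(b-k)k+(k-a)(k+b)}{k(b-a)}\,f(k)=\frac{2bk-a(b+k)}{k(b-a)}\,f(k),
\]
and the inequality $(2bk-a(b+k))/(k(b-a))\leq 2$ reduces algebraically to $ak\leq ab$, i.e.\ $k\leq b$, which holds by assumption.

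The hard part will be finding the right quantitative bound on $f(b)$. The naive sub-additivity bound $f(b)\leq f(a)+f(b-a)$ is too weak, because in the ``$b$ far from $a$'' regime the coefficient $(k-a)/(b-a)$ on $f(b)$ can be close to $1$ while $f(b)$ itself can grow roughly linearly in $b/k$. The ceiling form $f(b)\leq\lceil b/k\rceil f(k)$ tracks precisely this growth and makes the two contributions cancel exactly at the hypothesis $k\leq b$, which is why the constant $2$ suffices and appears to be the correct target.
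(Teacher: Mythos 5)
Your proposal is correct and takes essentially the same route as the paper: both define $g$ as the concave envelope of $f$, bound $f(b)\leq\lceil b/k\rceil f(k)\leq(1+b/k)f(k)$ via iterated sub-additivity together with $f(a)\leq f(k)$, and reduce the factor-$2$ bound to the inequality $k\leq b$. Your explicit piecewise-linear realization of the envelope makes the monotonicity and $g(0)=0$ checks marginally cleaner, but the substance is identical.
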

\begin{proof}
Let $g$ be the smallest concave function greater than $f$ on $0, \ldots, n-1$. Then $g$ is non-decreasing since, if not, we can replace $g(k)$ by $\min\{g(k),g(n-1)\}$ for all $k$ and obtain a concave function which is also greater than $f$. Also, $g(0)=0$ for similar reasons.

For all $k=0,\ldots,n-1$ we have $f(k) \leq g(k)$. Suppose $f(k) < g(k)$ for some $k$. Then by minimality there is an inequality of the form
\[
g(k) \geq \frac{b-k}{b-a} f(a) + \frac{k-a}{b-a} f(b),
\]
which holds as an equality, where $0 \leq a < k < b$. Now 
\[
f(b) = f \left( \frac{b}{k} k \right) \leq f \left( \left\lceil \frac{b}{k} \right\rceil  k \right) \leq \left\lceil \frac{b}{k} \right\rceil f(k) \leq \left(\frac{b}{k}+1\right) f(k),
\]
and $f(a) \leq f(k)$.
So 
\begin{eqnarray*}
g(k) & \leq &  \frac{b-k}{b-a}  f(k) + \frac{k-a}{b-a}   \left( \frac{b}{k} +1 \right) f(k)  \\
& = & f(k) +  \frac{k-a}{b-a}  \frac{b}{k} f(k) \leq 2 f(k),
\end{eqnarray*}
where the last inequality follows from 
\[
 \frac{k-a}{b-a}  \frac{b}{k}  =   \frac{1-a/k}{1-a/b} \leq 1,
\]
since $b>k$.
\end{proof}

\begin{thm}\label{symbasis}
Let $n>1$ be given.
Define the functions $\psi_1,\psi_2,\ldots,\psi_{n-1}$ by 
\[\psi_{i}(j) = \min(i,j),\]
for $j=0,1, \ldots, n-1$.
Let $(X,\delta)$ be a symmetric diversity with $|X| = n$. 
Then there are non-negative coefficients $\lambda_1,\ldots,\lambda_{n-1}$ such that for all nonempty $A \subseteq X$,
\[\delta(A) \leq \sum_{i=1}^{n-1} \lambda_i \psi_i(|A|-1) \leq 2 \delta(A).\]
\end{thm}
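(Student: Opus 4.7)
The plan is to use Lemma~\ref{symConcave} to replace $f$ by a concave, non-decreasing majorant $g$ with $g(0)=0$ and $f(k)\le g(k)\le 2f(k)$ for $k=0,1,\ldots,n-1$, and then show that any such $g$ can be written \emph{exactly} as a non-negative combination of the basis functions $\psi_i$. If $\sum_i \lambda_i \psi_i(j) = g(j)$ on $\{0,1,\ldots,n-1\}$ with $\lambda_i\ge 0$, then substituting $j=|A|-1$ and using $f\le g\le 2f$ yields the two-sided bound in the theorem.

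The key observation is that $\psi_i$ is the discrete ``hat'' function whose successive differences $\psi_i(j)-\psi_i(j-1)$ equal $1$ for $j\le i$ and $0$ for $j>i$. Let $s_j := g(j)-g(j-1)$ for $j=1,\ldots,n-1$. Since $g$ is non-decreasing and concave on $\{0,\ldots,n-1\}$ with $g(0)=0$, the sequence satisfies $s_1\ge s_2\ge\cdots\ge s_{n-1}\ge 0$. Matching increments forces any valid choice of coefficients to satisfy $\sum_{i\ge j}\lambda_i = s_j$, so I would define
\[
\lambda_i := s_i - s_{i+1} \quad (i=1,\ldots,n-1), \quad \text{with the convention } s_n:=0.
\]
Non-negativity of each $\lambda_i$ is immediate from the monotonicity of $(s_j)$.

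It remains to verify the identity $\sum_{i=1}^{n-1}\lambda_i\psi_i(j)=g(j)$ for each $j\in\{0,\ldots,n-1\}$. I would split the sum at $i=j$: for $i\ge j$, $\psi_i(j)=j$ and the contribution telescopes to $j\,s_j$; for $i<j$, $\psi_i(j)=i$ and an Abel summation gives $\sum_{i=1}^{j-1}s_i-(j-1)s_j$. Adding the two pieces yields $\sum_{i=1}^{j}s_i=g(j)-g(0)=g(j)$. Combining with the bounds from Lemma~\ref{symConcave} then gives
\[
\delta(A)=f(|A|-1)\le g(|A|-1)=\sum_{i=1}^{n-1}\lambda_i\psi_i(|A|-1)\le 2f(|A|-1)=2\delta(A).
\]

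There is essentially no substantive obstacle beyond Lemma~\ref{symConcave}; the decomposition is the finite-dimensional analogue of the elementary fact that a non-negative non-increasing sequence is a conic combination of indicator sequences, with the $\psi_i$ playing the role of those indicators at the level of increments. The only things to watch are boundary cases: $j=0$ is trivial since $g(0)=\psi_i(0)=0$, and the endpoint $i=n-1$ is handled cleanly by setting $s_n:=0$.
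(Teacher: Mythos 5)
Your proposal is correct and is essentially the paper's own argument: the coefficients $\lambda_i = s_i - s_{i+1} = 2g(i)-g(i+1)-g(i-1)$ (with $\lambda_{n-1}=g(n-1)-g(n-2)$) are exactly those used in the paper, and the telescoping/Abel-summation verification that $\sum_i \lambda_i\psi_i(j)=g(j)$ matches the paper's computation. Phrasing the decomposition in terms of the non-increasing increment sequence $(s_j)$ is just a cleaner bookkeeping of the same idea.
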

\begin{proof}
Let $f(j)=\delta(A)$ where $|A|=j+1$ for $j=0,\ldots,n-1$, as in Proposition~\ref{symAdditive}.
	By Lemma~\ref{symConcave} there is a concave and non-decreasing function $g:\{0,\ldots,n-1\}\rightarrow \Re$ such that $f(i) \leq g(i) \leq 2f(i)$ for all $i=0,\ldots,n-1$. For each $i=1,\ldots,n-2$ define $\lambda_i = 2g(i) - g(i+1) - g(i-1)$ and let $\lambda_{n-1} = g(n-1) - g(n-2)$. Since $g$ is concave and non-decreasing, all $\lambda_i$ terms are non-negative.
Then for all $1 \leq j \leq n-1$ we have
	\begin{eqnarray*} 
\sum_{i=1}^{n-1} \lambda_i \psi_i(j) & = & \sum_{i=1}^j \lambda_i \cdot i + \sum_{i=j+1}^{n-1} \lambda_i \cdot j \\
& = & \sum_{i=1}^j (2g(i) - g(i-1) - g(i+1)) \cdot i + j \cdot \sum_{i=j+1}^{n-2} (2g(i) - g(i-1) - g(i+1))   \\ && \quad  + \, j \cdot (g(n-1) - g(n-2)) \\
& = & g(j).
\end{eqnarray*}
	\end{proof}

We note that the functions $\psi_i$, $i = 1, \ldots, n-1$, each correspond to a diversity, since they satisfy the conditions of Proposition~\ref{symAdditive}.

\section{$L_1$ embedding of symmetric diversities}

In this section we prove our main result, that any finite, symmetric diversity can be embedded in an $L_1$ diversity with constant distortion. By Theorem~\ref{symbasis} every symmetric diversity can be approximated by a non-negative linear combination of functions $\psi_1,\ldots,\psi_{n-1}$ with at worst constant distortion. Hence to prove the main result, we need to show that each function $\psi_i$ can be embedded with constant distortion. First, we characterize  diversities on finite sets which are both symmetric and isomorphically embeddable  in $L_1$, that is, embeddable with constant 1. 

Given $U \subseteq X$ define the diversity $\delta_U$ by
\[\delta_U(A) = \begin{cases} 1, & \mbox{ if $A \cap U$ and $A \setminus U$ both non-empty}; \\ 0, & \mbox{ otherwise.} \end{cases} \]
In other words, $\delta_U(A) = 1$ when $U$ cuts $A$ into two parts. We say that $\delta_U$ is the {\em split diversity} for the split (bipartition) $U|(X-U)$. From \cite{BryantETAL2013a}
(see also \cite{BryantETAL2012b}) we have that $(X,\delta)$ is an $L_1$ embeddable diversity if and only if it is a non-negative linear combination of split diversities. 

\begin{prop} \label{prop:symL1}
Let $(X,\delta)$ be a finite, symmetric diversity and suppose $\delta(A) = f(|A| - 1)$ for all nonempty $A \subseteq X$. Then $\delta$ is $L_1$-embeddable if and only if there are non-negative $\lambda_1,\ldots,\lambda_{n-1}$ such that 
\begin{equation}
f(k) = \sum_{\ell = 1}^{n-1} \lambda_\ell \left(\binom{n}{\ell} - \binom{n-k-1}{n - \ell} - \binom{n-k-1}{\ell} \right). 
\end{equation}
for all $k=1,\ldots,n-1$.
\end{prop}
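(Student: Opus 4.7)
The plan is to invoke the characterization recalled just above the statement: on a finite set, a diversity is $L_1$-embeddable if and only if it is a non-negative linear combination of split diversities $\delta_U$, $U\subseteq X$. Given this, the ``if'' direction is a direct verification. I would define $\mu_U := \lambda_{|U|}$ for every proper non-empty $U\subseteq X$, observe that $\sum_U \mu_U \delta_U$ is symmetric by construction, and evaluate it at an arbitrary $A$ of size $k+1$ by counting, for each $\ell\in\{1,\ldots,n-1\}$, the number of subsets of size $\ell$ which split $A$.

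For the ``only if'' direction, write $\delta = \sum_{U\subseteq X} \mu_U \delta_U$ with $\mu_U \geq 0$. The key step is to symmetrize the weights. For every permutation $\pi$ of $X$ the identity $\delta_{\pi(U)}(A) = \delta_U(\pi^{-1}(A))$ combined with the symmetry of $\delta$ forces $\sum_U \mu_{\pi(U)}\delta_U = \delta$. Averaging over all $\pi \in S_X$ therefore produces new non-negative coefficients $\mu'_U := \tfrac{1}{n!}\sum_\pi \mu_{\pi(U)}$ which still represent $\delta$ as a combination of split diversities but which now depend only on $|U|$. I let $\lambda_\ell$ denote the common value of $\mu'_U$ on subsets of size $\ell$.

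The rest is counting. For $A\subseteq X$ of size $k+1$, a subset $U$ of size $\ell$ fails to split $A$ precisely when $U\cap A=\emptyset$ or $A\subseteq U$; the former condition gives $\binom{n-k-1}{\ell}$ subsets (choose $U\subseteq X\setminus A$), the latter gives $\binom{n-k-1}{\ell-k-1}$ subsets (choose $U\setminus A\subseteq X\setminus A$), and these two families are disjoint since $A$ is non-empty. Hence
\[
\#\{U:\ |U|=\ell,\ \delta_U(A)=1\} \;=\; \binom{n}{\ell} - \binom{n-k-1}{\ell} - \binom{n-k-1}{\ell-k-1},
\]
and applying the identity $\binom{n-k-1}{\ell-k-1}=\binom{n-k-1}{n-\ell}$ along with $f(k)=\delta(A)=\sum_\ell \lambda_\ell\cdot\#\{U:|U|=\ell,\delta_U(A)=1\}$ yields the stated formula.

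I expect the only real subtlety to be book-keeping: checking that the symmetrization does not destroy positivity (it obviously does not), and handling the edge cases $\ell<k+1$ (where $\binom{n-k-1}{\ell-k-1}=0$, which is correct since no size-$\ell$ set can contain $A$) and $\ell\in\{0,n\}$ (which can be dropped because $\delta_\emptyset$ and $\delta_X$ are identically zero). No deeper ideas appear to be needed.
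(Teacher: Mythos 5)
Your proposal is correct, and the ``if'' direction and the counting are essentially identical to the paper's. The genuine difference is in how you establish that the split weights can be taken to depend only on $|U|$. The paper gets this for free from Theorem~4 of \cite{BryantETAL2012b}, which gives an explicit inversion formula $w_B = \tfrac12\sum_{A\supseteq B}(-1)^{|A|+|B|+1}\delta(A)$ for the (essentially unique) split decomposition; since $\delta(A)$ depends only on $|A|$, so does $w_B$. You instead take an arbitrary non-negative split decomposition and symmetrize it by averaging over the action of $S_X$, using that $\delta_{\pi(U)}(A)=\delta_U(\pi^{-1}(A))$ and that $\delta$ is permutation-invariant, so each permuted weight vector is again a valid decomposition and their average (which is non-negative and constant on cardinality classes) still represents $\delta$. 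This is a correct and standard convexity/group-averaging argument; it buys you independence from the external inversion theorem (you only need the existence of \emph{some} non-negative split decomposition, not its uniqueness or an explicit formula), at the cost of not exhibiting the weights explicitly in terms of $f$. Your handling of the edge cases ($\ell\in\{0,n\}$ dropped because $\delta_\emptyset=\delta_X\equiv 0$; $\binom{n-k-1}{\ell-k-1}=\binom{n-k-1}{n-\ell}$, vanishing when $\ell<k+1$) is also sound, and the double-counting of each split as both $U$ and $X\setminus U$ is consistent with the paper's convention $w_B=w_{X-B}$, so the final formula matches.
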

\begin{proof}
Suppose that $\delta$ is $L_1$-embeddable.  Hence there there there are non-negative weights $w_B$ for all $B \subseteq X$ such that 
$w_B = w_{(X-B)}$, $w_X = 0$, and for all $A \subseteq X$
\begin{eqnarray*}
\delta(A) &=& \sum_{B \subseteq X} w_B \delta_B(A). 
\end{eqnarray*}
From Theorem 4 of \cite{BryantETAL2012b} we have
\begin{eqnarray*}
w_B &=& \frac{1}{2} \sum_{A : B \subseteq A} (-1)^{|A|+|B|+1} \delta(A) \\
	 &=& \frac{1}{2} \sum_{A : B \subseteq A} (-1)^{|A|+|B|+1} f(|A| - 1) \\ 
	& = & \frac{1}{2} \sum_{k=|B|}^n \binom{n-|B|}{k-|B|} (-1)^{k+|B|+1} f(k-1).
\end{eqnarray*}
Hence $w_B$ depends only on the cardinality of $B$, not  on the elements of the set. That is, $w_B = w_{B'}$ whenever $|B'| = |B|$. Define $\lambda$ so that $\lambda_{\ell} = w_B$ for all $B$ with $|B| = \ell$. Then 
\begin{eqnarray*}
\delta(A) &=& \sum_{B \subseteq X} w_B \delta_B(A) \\
& = & \sum_{\ell = 1}^{n-1} \sum_{B \subseteq X,\, |B| = \ell} \lambda_{\ell} \delta_B(A) \\
& = & \sum_{\ell = 1}^{n-1} \lambda_{\ell} \left| \{B:|B| = \ell,\, A \not \subseteq B,\, B\cap A \neq \emptyset\} \right| \\
& = & \sum_{\ell = 1}^{n-1} \lambda_{\ell} \left(\binom{n}{\ell} - \binom{n-|A|}{\ell - |A|} - \binom{n-|A|}{\ell}\right),
\end{eqnarray*}
so
\[f(k) = \sum_{\ell = 1}^{n-1} \lambda_{\ell} \left(\binom{n}{\ell} - \binom{n-k-1}{n-\ell} - \binom{n-k-1}{\ell}\right).\]

For the converse, suppose that coefficients $\lambda_\ell$ satisfy the conditions of the Proposition. Let $w_B = \lambda_{|B|}$ for all $B \subseteq X$ and the result follows.
\end{proof}

For each $\ell = 1,2,\ldots,n-1$ we define the function $\varphi_\ell$ for  $k= 0,1,\ldots,n-1$ by
\[\varphi_\ell(k) =  \frac{\binom{n}{\ell} - \binom{n-k-1}{n-\ell} - \binom{n-k-1}{\ell}}{2\binom{n-2}{\ell-1}}.
\] 
Figure~\ref{fig:plot_of_varphi} shows plots of $\varphi_\ell(k)$ versus $k$ for $n=20$ and $\ell=1,\ldots,10$.
 By Proposition~\ref{prop:symL1} any symmetric diversity $\delta$ with $\delta(A) = f(|A| - 1)$ is $L_1$-embeddable exactly when $f$ is a non-negative combination of the functions $\varphi_1,\ldots,\varphi_{n-1}$.\\

\begin{figure}
\centerline{
\includegraphics[width=10cm]{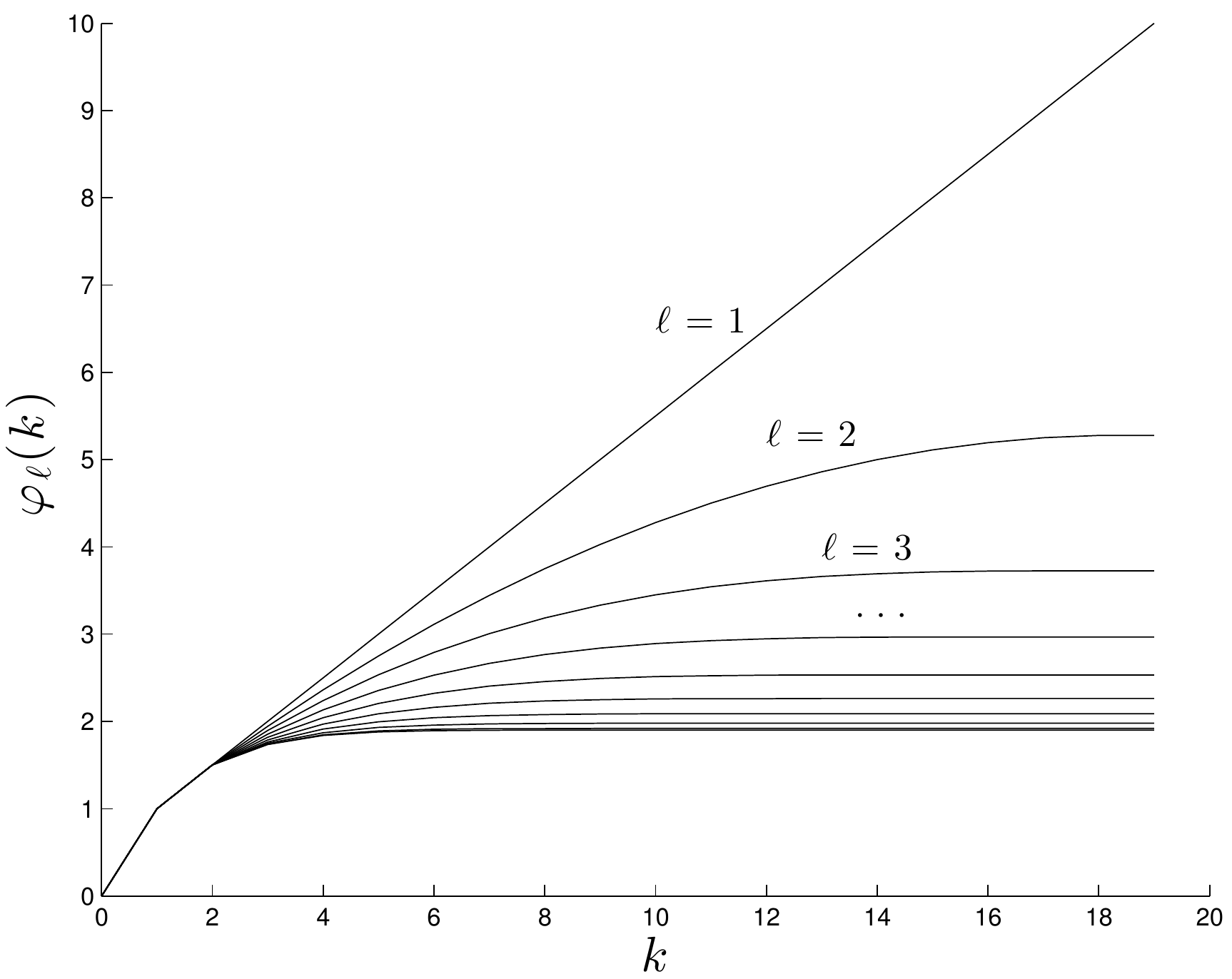}}
\caption{  \label{fig:plot_of_varphi} The values of  $\varphi_\ell(k)$ for  $\ell =1,\ldots, n/2$, for $n=20$. }
\end{figure}

\begin{prop} \label{varphi}
The functions $\varphi_\ell$ are concave and non-decreasing for $0 \leq k \leq n-1$, with $\varphi_\ell(0)=0$, $\varphi_\ell(1) = 1$ and
\[\varphi_\ell(n-1) = \frac{n(n-1)}{2\ell(n-\ell)}.\]
\end{prop}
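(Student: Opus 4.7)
The plan is to do everything by direct calculation with Pascal's identity on the numerator
\[ N_\ell(k) := \binom{n}{\ell} - \binom{n-k-1}{n-\ell} - \binom{n-k-1}{\ell}. \]
The denominator $2\binom{n-2}{\ell-1}$ is a positive constant in $k$, so monotonicity and concavity of $\varphi_\ell$ are equivalent to monotonicity and concavity of $N_\ell(k)$.

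First I would check the three prescribed values. For $\varphi_\ell(0)$, the numerator is $\binom{n}{\ell}-\binom{n-1}{n-\ell}-\binom{n-1}{\ell} = \binom{n}{\ell}-\binom{n-1}{\ell-1}-\binom{n-1}{\ell} = 0$ by Pascal's identity. For $\varphi_\ell(1)$, applying Pascal twice gives $\binom{n}{\ell} = \binom{n-2}{\ell-2}+2\binom{n-2}{\ell-1}+\binom{n-2}{\ell}$, and since $\binom{n-2}{n-\ell}=\binom{n-2}{\ell-2}$, the numerator simplifies to $2\binom{n-2}{\ell-1}$, giving $\varphi_\ell(1)=1$. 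For $\varphi_\ell(n-1)$, the terms $\binom{0}{n-\ell}$ and $\binom{0}{\ell}$ both vanish when $1\le\ell\le n-1$, so the numerator is $\binom{n}{\ell}$, and a short calculation with factorials yields $\binom{n}{\ell}/(2\binom{n-2}{\ell-1})=n(n-1)/(2\ell(n-\ell))$.

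For monotonicity I compute the forward difference. Using Pascal in the form $\binom{n-k-1}{\ell}-\binom{n-k-2}{\ell} = \binom{n-k-2}{\ell-1}$ (and the same on the other binomial), the first difference is
\[ N_\ell(k+1) - N_\ell(k) = \binom{n-k-2}{n-\ell-1} + \binom{n-k-2}{\ell-1} \;\ge\; 0, \]
valid for $0 \le k \le n-2$, which gives non-decreasingness. Applying Pascal once more to each of these two binomials yields
\[ \bigl(N_\ell(k+1)-N_\ell(k)\bigr) - \bigl(N_\ell(k+2)-N_\ell(k+1)\bigr) = \binom{n-k-3}{n-\ell-2} + \binom{n-k-3}{\ell-2} \;\ge\; 0, \]
which is the discrete concavity inequality. (Edge cases $\ell=1$ or $\ell=n-1$ produce a $\binom{\cdot}{-1}=0$ term that causes no trouble.) This is the substance of the proposition.

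There is no real obstacle here: the only thing to be careful about is keeping the index arithmetic consistent (in particular recognizing $\binom{n-k-1}{n-\ell}=\binom{n-k-1}{\ell-k-1}$ where relevant, and handling the boundary indices $\ell=1,\,n-1$), but once the first- and second-difference expressions above are written down, both non-decreasingness and concavity are immediate from non-negativity of binomial coefficients.
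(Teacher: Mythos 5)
Your proof is correct, and in substance it follows the same route as the paper: check the boundary values directly and establish concavity by showing that the discrete second difference of the numerator is non-negative. The differences are in execution, and they mostly favour your version. The paper obtains non-decreasingness indirectly, by observing that $\varphi_\ell$ is (up to a positive scalar) the function attached to the symmetric diversity $\sum_{|B|=\ell}\delta_B$ and invoking Proposition~\ref{symAdditive}, whereas you compute the first difference $N_\ell(k+1)-N_\ell(k)=\binom{n-k-2}{n-\ell-1}+\binom{n-k-2}{\ell-1}$ directly. For concavity the paper quotes, ``after some algebraic manipulation,'' a second-difference formula involving ratios such as $\frac{(\ell-1)^2}{(n-k-\ell-1)(n-k-\ell-2)}$, while your double application of Pascal's identity gives the cleaner $\binom{n-k-3}{n-\ell-2}+\binom{n-k-3}{\ell-2}$, whose non-negativity is immediate under the usual convention that out-of-range binomial coefficients vanish. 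You also verify $\varphi_\ell(0)=0$, $\varphi_\ell(1)=1$ and the value at $n-1$ explicitly, which the paper omits, and your range restrictions ($k\le n-2$ for the first difference, $k\le n-3$ for the second) together with the treatment of the edge cases $\ell=1$ and $\ell=n-1$ are handled correctly.
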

\begin{proof}
By Proposition~\ref{symAdditive}, $\varphi_\ell$ is non-decreasing 	and sub-additive. To show concavity, note that (after some algebraic manipulation)
\begin{eqnarray*}
2 \binom{n-2}{\ell-1} \left[ \Big( \varphi_\ell(k+1) - \varphi_\ell(k)\Big)  - \Big( \varphi_\ell(k+2) - \varphi_\ell(k+1)\Big) \right] \hspace{-8cm} && \\
&=&\binom{n-k-3}{\ell} \frac{(\ell-1)^2}{(n-k-\ell-1)(n-k-\ell-2)} + \binom{n-k-3}{n-\ell} \frac{(n-\ell)(n-\ell-1)}{(\ell - k - 1)(\ell - k - 2)},
\end{eqnarray*}
which is non-negative.
\end{proof}

By Theorem~\ref{symbasis}, 
if $\delta$ is a symmetric diversity with $\delta(A) = f(|A| - 1)$ for all $A$ then there are coefficients 
$\lambda_1,\ldots,\lambda_{n-1}$ such that 
\[f(k) \leq \sum_{i=1}^{n-1} \lambda_i \psi_i(k) \leq 2 f(k),\]
for $k=0, \ldots, n-1$,
where $\psi_i(k) = \min\{i,k\}$. Hence the problem of embedding general symmetric diversities in $L_1$ with constant distortion reduces to the problem of embedding a diversity $\delta_i$,  defined by  $\delta_i(A) = \psi_i(|A| - 1)$ for all $A \subseteq X$. We do this in two steps.  First we approximate each function $\psi_i$ by a function $\Psi_{x(\ell)}$, with $\Psi_{x(\ell)}(k)=\min\{x(\ell),k\}$ 
where  
\[
 x(\ell) = \varphi_\ell(n-1) = \frac{n(n-1)}{2\ell(n-\ell)},\]
 for some $\ell$,  $1 \leq \ell \leq \lfloor n/2 \rfloor$. Second, we show that the function $\Psi_{x(\ell)}$ can itself be approximated (up to a constant scalar) by $\varphi_\ell$. In Figure~\ref{fig:plot_of_psi} we show plots of both $\psi_i$ and $\Psi_{x(\ell)}$ for the case of $n=20$.

\begin{figure}
\includegraphics[width=7.5cm]{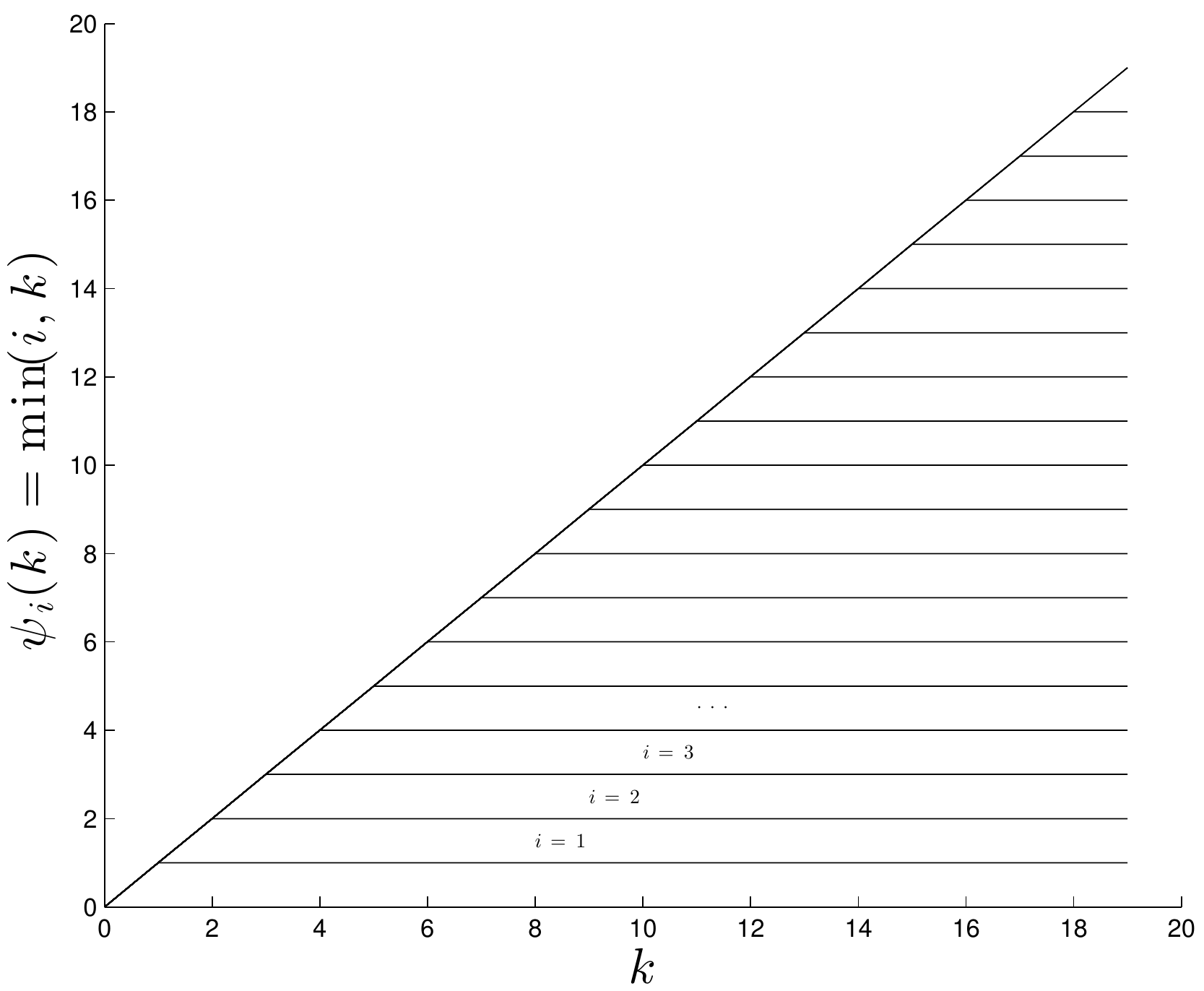}
\includegraphics[width=7.5cm]{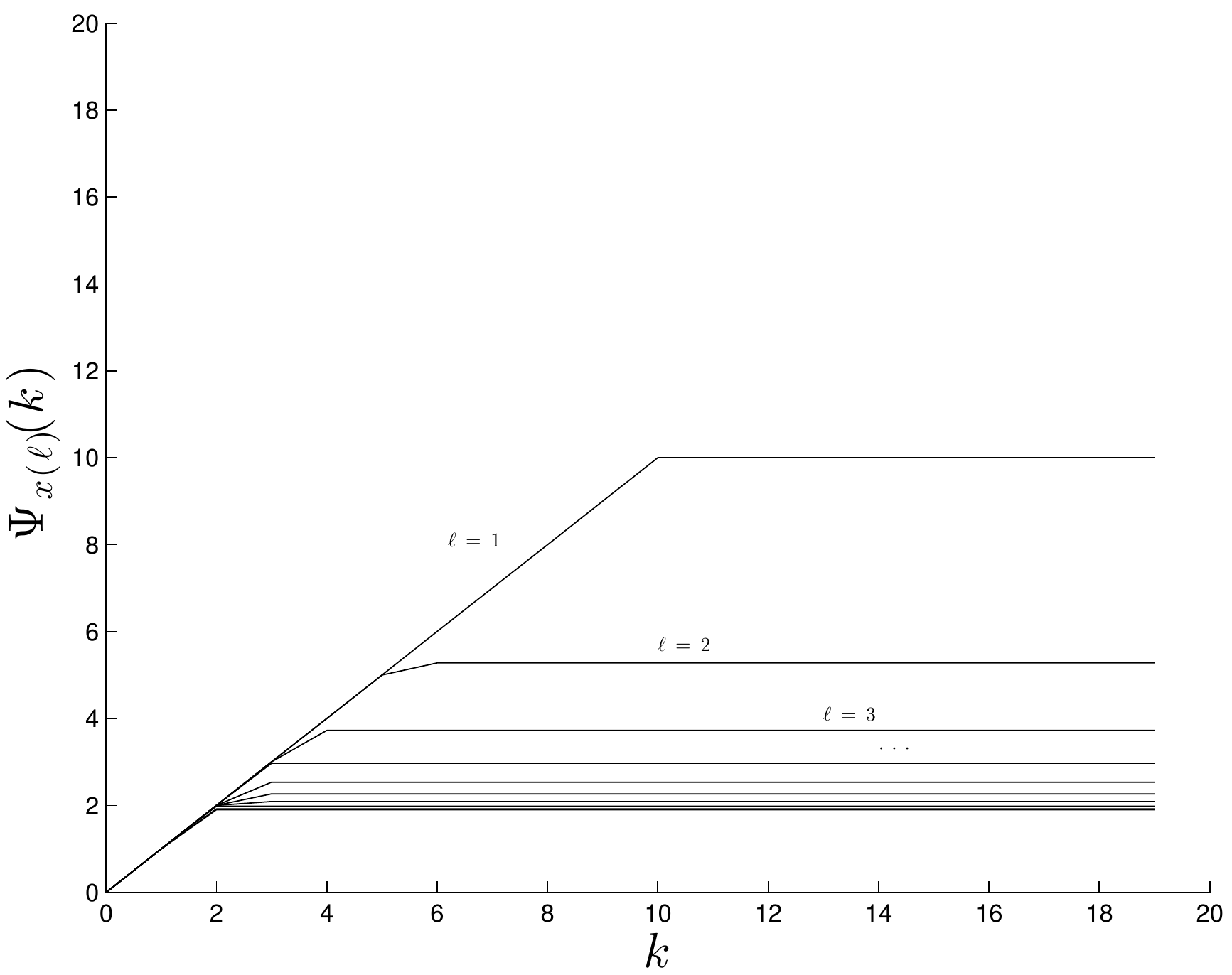}
\caption{  \label{fig:plot_of_psi} On the left, plots of $\psi_i(k)$ for $i=1,\ldots, n$. On the right, plots of $\Psi_{x(\ell)}(k)$ for $\ell=1,\ldots,n/2$. In both plots $n=20$. }
\end{figure}

\begin{lem} \label{approxPsi}
If $i=1$, letting $\ell = \lfloor n/2 \rfloor$ gives
\[\frac{1}{2} \Psi_{x(\ell)}(k) \leq \psi_i(k) \leq  \Psi_{x(\ell)}(k),\]
for $k=0, \ldots, n-1$.
If $i \geq 2$ then there is $\ell \in \{1,2,\ldots,\lfloor n/2 \rfloor\}$ such that
\[\Psi_{x(\ell)}(k) \leq \psi_i(k) \leq 2 \Psi_{x(\ell)}(k),\]
for all $k=0,\ldots,n-1$.
\end{lem}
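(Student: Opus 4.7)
The plan is to find, for each $i$, an index $\ell$ so that the ``kink point'' $x(\ell)$ lies within a factor of $2$ of $i$, after which the pointwise comparison of $\psi_i(k)=\min(i,k)$ and $\Psi_{x(\ell)}(k)=\min(x(\ell),k)$ follows from elementary case analysis. First I would record two boundary values of $x$. At $\ell=1$, $x(1)=n/2$. At $\ell=\lfloor n/2\rfloor$, a direct computation splits into cases: if $n$ is even, $x(n/2)=2(n-1)/n\in[1,2]$, and if $n$ is odd, $x((n-1)/2)=2n/(n+1)\in[1,2]$. Since $\ell(n-\ell)$ is strictly increasing for $\ell=1,\ldots,\lfloor n/2\rfloor$, the sequence $x(\ell)$ is strictly decreasing on this range.

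The key monotonicity step is the ratio bound
\[
\frac{x(\ell-1)}{x(\ell)}
=\frac{\ell(n-\ell)}{(\ell-1)(n-\ell+1)}
\leq \frac{\ell}{\ell-1}\leq 2,\qquad \ell=2,\ldots,\lfloor n/2\rfloor,
\]
which guarantees that consecutive values of $x$ never jump by more than a factor of $2$.

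For $i=1$, I would take $\ell=\lfloor n/2\rfloor$ so that $x(\ell)\in[1,2]$. Then $\psi_1$ and $\Psi_{x(\ell)}$ agree at $k=0$ and $k=1$, and for $k\geq 2$ one has $\psi_1(k)=1\leq x(\ell)=\Psi_{x(\ell)}(k)\leq 2=2\psi_1(k)$, giving $\tfrac{1}{2}\Psi_{x(\ell)}(k)\leq\psi_1(k)\leq\Psi_{x(\ell)}(k)$.

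For $i\geq 2$, set $\ell^*=\min\{\ell\in\{1,\ldots,\lfloor n/2\rfloor\}:x(\ell)\leq i\}$. This set is nonempty because $x(\lfloor n/2\rfloor)\leq 2\leq i$. If $\ell^*=1$, then $x(1)=n/2\leq i\leq n-1<n=2x(1)$. If $\ell^*\geq 2$, minimality gives $x(\ell^*-1)>i$, and the ratio bound gives $i<x(\ell^*-1)\leq 2x(\ell^*)$. In either case $y:=x(\ell^*)$ satisfies $y\leq i\leq 2y$. A final short case split on whether $k\leq y$, $y\leq k\leq i$, or $k\geq i$ yields $\min(y,k)\leq\min(i,k)\leq 2\min(y,k)$ for every $k\in\{0,\ldots,n-1\}$, which is exactly $\Psi_{x(\ell^*)}(k)\leq\psi_i(k)\leq 2\Psi_{x(\ell^*)}(k)$. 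The only nontrivial obstacle is the ratio bound together with the need to verify that $\ell^*=1$ still produces $i\leq 2x(\ell^*)$, which is where the restriction $i\leq n-1$ (built into the range of $i$ in Theorem~\ref{symbasis}) is used.
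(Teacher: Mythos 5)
Your proof is correct and follows essentially the same route as the paper: pin down the boundary values $x(1)=n/2$ and $x(\lfloor n/2\rfloor)\le 2$, bound the ratio of consecutive $x(\ell)$ by $2$ to locate an $\ell$ with $x(\ell)\le i\le 2x(\ell)$, and finish with the three-way case split on $k$. Your explicit minimality argument for $\ell^*$ and the check that $\ell^*=1$ still works because $i\le n-1<n=2x(1)$ is a slightly more careful rendering of a step the paper states only informally, but it is the same idea.
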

\begin{proof}
First consider the case $i=1$. Note that $\psi_1(0)=0$ and $\psi_1(k) = 1$ for all $k=1,\ldots,n-1$. 
Letting $\ell = \lfloor n/2 \rfloor$ gives
\[
\Psi_{x(\ell)}(k)= \min\{    x(\ell) , k\} = \min \left\{   \frac{ n (n-1)}{ 2 \lfloor  n/2 \rfloor  \lceil n/2 \rceil}, k \right\}.
\]
We have the following inequalities:
\[
 1 \leq   \frac{ n (n-1)}{ 2 \lfloor  n/2 \rfloor  \lceil n/2 \rceil} \leq  2.
\]
So for $k=1,\ldots, n-1$, $1 \leq \Psi_{x(\ell)}(k) \leq 2$. Hence we get
\[
\frac{1}{2} \Psi_{x(\ell)}(k) \leq \psi_1(k) \leq \Psi_{x(\ell)}(k),
\]
for all $k$.

Now suppose $i \geq 2$. For fixed $n$, the function $x(\ell)$ has a maximum $x(1) = n/2$ and a minimum $x(\lfloor n/2 \rfloor) \leq 2$. For all $\ell = 1,2,\ldots,\lfloor n/2 -1 \rfloor$ we have
 \[1 \leq \frac{x(\ell)}{x(\ell+1)} = \left(1 + \frac{1}{\ell}\right)\left(1 - \frac{1}{n-\ell}\right) \leq 2.\]
 That is, the values $x(\ell)$ for $\ell = 1,\ldots, \lfloor n/2 \rfloor$ span from below $2$ up to $n/2$ and the ratio between successive $x(\ell)$ never exceeds $2$. 
 Hence for all $i \geq 2$  there is $\ell$ such that 
 \[x(\ell) \leq i \leq 2 x(\ell).\]

Using this $\ell$, when $1 \leq k \leq x(\ell)$
\[ \Psi_{x(\ell)}(k)=k = \psi_i(k).\]
When $x(\ell) \leq k \leq i$
\[\Psi_{x(\ell)}(k) = x(\ell)  \leq k = \psi_{i}(k) \leq i \leq 2 x(\ell) = 2\Psi_{x(\ell)}(k).\]
When $i \leq k \leq n-1$
\[\Psi_{x(\ell)}(k) = x(\ell) \leq \psi_{i}(k) = i \leq 2 x(\ell) = 2\Psi_{x(\ell)}(k).\]
 The result follows.
 \end{proof}

%
%

\begin{thm} \label{thm:EmbedSymm}
Every finite symmetric diversity can be embedded in $L_1$ with constant distortion.
\end{thm}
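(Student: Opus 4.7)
The plan is to chain together a sequence of constant-factor approximations, reducing the problem to the $L_1$-embeddability of the basis diversities associated to the functions $\varphi_\ell$, which is guaranteed by Proposition~\ref{prop:symL1}. Given a finite symmetric diversity $(X,\delta)$ with $|X|=n$ and $\delta(A) = f(|A|-1)$, Theorem~\ref{symbasis} supplies non-negative coefficients $\lambda_1,\ldots,\lambda_{n-1}$ with $f(k) \leq \sum_i \lambda_i \psi_i(k) \leq 2 f(k)$, and Lemma~\ref{approxPsi} pairs each $\psi_i$ with some $\Psi_{x(\ell)}$ within a factor of $2$. Since non-negative combinations of $L_1$-embeddable diversities are themselves $L_1$-embeddable (take concatenations of suitably rescaled coordinate blocks of the individual embeddings), the task reduces to producing, for each $\ell \in \{1,\ldots,\lfloor n/2 \rfloor\}$, a constant-distortion $L_1$-embedding of the diversity $A \mapsto \Psi_{x(\ell)}(|A|-1)$.

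The crux is a pointwise comparison of $\Psi_{x(\ell)}$ with $\varphi_\ell$. I would aim to establish
\[\varphi_\ell(k) \leq \Psi_{x(\ell)}(k) \leq c\, \varphi_\ell(k)\]
for an absolute constant $c$, uniformly in $\ell$ and in $k \in \{0,\ldots,n-1\}$. The upper inequality is immediate from Proposition~\ref{varphi}: concavity of $\varphi_\ell$ with $\varphi_\ell(0)=0$ and $\varphi_\ell(1)=1$ forces $\varphi_\ell(k) \leq k$, while monotonicity together with $\varphi_\ell(n-1)=x(\ell)$ forces $\varphi_\ell(k) \leq x(\ell)$; taking the minimum gives $\Psi_{x(\ell)}(k)$. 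The hard part will be the matching lower bound $\Psi_{x(\ell)}(k) \leq c\,\varphi_\ell(k)$. The regime $x(\ell) \leq 2$ (that is, $\ell$ close to $n/2$) is easy, since $\Psi_{x(\ell)}(k) \leq 2$ while $\varphi_\ell(k) \geq 1$ for $k \geq 1$ by monotonicity, so $c=2$ suffices there. When $x(\ell)$ is large, the pure chord bound $\varphi_\ell(k) \geq kx(\ell)/(n-1)$ coming from concavity is too weak by a factor of order $n/x(\ell)$, so I expect one must locate the \emph{knee} of $\varphi_\ell$ more precisely. Plausible routes are to estimate $\varphi_\ell$ at a well-chosen intermediate point such as $k = \lceil x(\ell)\rceil$ or $k = \ell$ using the explicit binomial formula and tail bounds for $\binom{n-k-1}{\ell}/\binom{n}{\ell}$ (exploiting the symmetry $\varphi_\ell = \varphi_{n-\ell}$), or to refine the concavity argument by combining several chords to trap $\varphi_\ell$ within a constant factor of $\Psi_{x(\ell)}$ near its knee. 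A small absolute constant (say $c=2$ or $c=4$) is likely to work.

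With such a $c$ in hand, the $\varphi_\ell$-diversity is isomorphically $L_1$-embeddable by Proposition~\ref{prop:symL1}, and after rescaling so that $\varphi_\ell(n-1)$ matches $\Psi_{x(\ell)}(n-1)$ the same embedding realises the $\Psi_{x(\ell)}$-diversity with distortion at most $c$. Multiplying the three distortion factors along the chain ($2$ from Theorem~\ref{symbasis}, $2$ from Lemma~\ref{approxPsi}, and $c$ from the comparison with $\varphi_\ell$) yields an $L_1$-embedding of the original $\delta$ with distortion at most $4c$, independent of $n$, completing the proof.
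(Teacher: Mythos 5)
Your outline follows the paper's proof almost step for step: reduce to the $\psi_i$ via Theorem~\ref{symbasis}, pass to the $\Psi_{x(\ell)}$ via Lemma~\ref{approxPsi}, and then compare $\Psi_{x(\ell)}$ pointwise with $\varphi_\ell$ so that Proposition~\ref{prop:symL1} finishes the job. Your upper bound $\varphi_\ell(k) \leq \Psi_{x(\ell)}(k)$ (concavity gives $\varphi_\ell(k)\leq k$, monotonicity gives $\varphi_\ell(k)\leq \varphi_\ell(n-1)=x(\ell)$) is exactly right and matches the paper.

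However, the lower bound $\Psi_{x(\ell)}(k)\leq c\,\varphi_\ell(k)$ — which you yourself identify as ``the hard part'' — is never actually proved: you list ``plausible routes'' and assert that ``a small absolute constant is likely to work.'' This is the entire analytic content of the theorem, so the proposal has a genuine gap. Concretely, what is needed (and what the paper does) is to show that $\varphi_\ell(k)/\varphi_\ell(n-1)$ is bounded away from $0$ for all $k\geq x(\ell)$, uniformly in $\ell$ and $n$. Writing
\[
\frac{\varphi_\ell(k)}{\varphi_\ell(n-1)} = 1 - \frac{\binom{n-k-1}{n-\ell}}{\binom{n}{\ell}} - \frac{\binom{n-k-1}{\ell}}{\binom{n}{\ell}} \;\geq\; 1 - \Bigl(\tfrac{\ell}{n}\Bigr)^{k+1} - \Bigl(\tfrac{n-\ell}{n}\Bigr)^{k+1},
\]
one observes the right-hand side is increasing in $k$, so it suffices to evaluate it at $k$ of order $x(\ell)=\frac{n(n-1)}{2\ell(n-\ell)}$; substituting $z=\ell/n$ and analysing $1-z^{y+1}-(1-z)^{y+1}$ with $y\asymp \frac{1}{z(1-z)}$ over $z\in(0,1/2]$ yields a uniform lower bound of $1/5$. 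The term $(1-z)^{y+1}$ tends to $e^{-1/(\kappa+2)}$, which is close to $0.63$, so the surviving constant is genuinely small and requires the careful choice of the evaluation point; it does not fall out of soft concavity reasoning. Once this bound at $k\approx x(\ell)$ is in hand, concavity of $\varphi_\ell$ through $(0,0)$ transfers it to the range $0\leq k\leq x(\ell)$, exactly as you anticipate. Your first ``plausible route'' (estimating at $k=\lceil x(\ell)\rceil$ via the binomial formula) is the correct one; the proof is incomplete until you carry it out.
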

\begin{proof}
We prove that there is a constant $K$ independent of $n$ such that for each $\ell$ and all $k=0,1,\ldots,n-1$ we have
\[\varphi_{\ell}(k) \leq \Psi_{x(\ell)}(k) \leq K \varphi_{\ell}(k).\]
The result then follows from Theorem~\ref{symbasis}, Proposition~\ref{prop:symL1} and  Lemma~\ref{approxPsi}.

Recall that $x(\ell)= \frac{n(n-1)}{2 \ell (n-\ell)} \leq \frac{n}{2}$ for all $\ell$.

First consider the case that $x(\ell) \leq k \leq n-1$. For these values of $k$ the function $\Psi_{x(\ell)}(k) = x(\ell)$. Since $\varphi_{\ell}(k)$ is non-decreasing and $\varphi_{\ell}(n-1) = x(\ell)$, we have $\varphi_{\ell}(k) \leq x(\ell)=\Psi_{x(\ell)}(k)$ and the lower bound  on $\Psi_{x(\ell)}(k)$ is established.

For the bound in the other direction, it suffices to show that  $\varphi_\ell(k) /  \Psi_{x(\ell)}(k)$ is bounded away from zero for all $k$ and $\ell$. We have
\begin{eqnarray*}
\frac{ \varphi_\ell(k) }{ \Psi_{x(\ell)}(k)}& = &\frac{\varphi_{\ell}(k)}{x(\ell)} = \frac{\varphi_{\ell}(k)}{\varphi_{\ell}(n-1)} \\
& = &  \frac{\binom{n}{\ell} - \binom{n-k-1}{n-\ell} - \binom{n-k-1}{\ell}}{\binom{n}{\ell}} \\
& = & 1 - \frac{\ell (\ell-1) \cdots (\ell-k)}{n (n-1) \cdots (n-k)} - \frac{(n-\ell) (n-\ell-1) \cdots (n-\ell-k)}{n (n-1) \cdots (n-k)} \\
& \geq & 1 - \left(\frac{\ell}{n}\right)^{k+1} - \left(\frac{n-\ell}{n}\right)^{k+1}.
\end{eqnarray*}
Note that the quantity on the right is increasing with respect to $k$.
Fix $\kappa = 0.2$ and let $y = y(\ell) = \frac{n^2}{(\kappa + 2) \ell (n-\ell)}$. Then if $n > 1 + 2 / \kappa = 11$ we have $x(\ell) \geq y$ and so 
\begin{eqnarray*} 
\frac{ \varphi_\ell(k) }{ \Psi_{x(\ell)}(k)} &  \geq &  1 - \left(\frac{\ell}{n}\right)^{y+1} - \left(\frac{n-\ell}{n}\right)^{y+1},
\end{eqnarray*}
for all $k$, $x(\ell) \leq k \leq n-1$.
Let $z = \frac{\ell}{n}$, so $y = \frac{1}{(\kappa + 2)z(1-z)}$ and
\begin{eqnarray*}
\frac{ \varphi_\ell(k) }{ \Psi_{x(\ell)}(k)}& \geq & 1 - z^{\frac{1}{(\kappa + 2)z(1-z)}+1} - (1-z)^{\frac{1}{(\kappa + 2)z(1-z)}+1} \\
& = & 1 - f_1(z) - f_2(z),
\end{eqnarray*}
where
\begin{eqnarray*}
f_1(z) & = & \left(z\right)^{\frac{1}{(\kappa + 2)z(1-z)}+1}, \\
f_2(z) & = & \left(1-z\right)^{\frac{1}{(\kappa + 2)z(1-z)}+1}.
\end{eqnarray*}
The function $f_1(z) + f_2(z)$ is symmetric on the interval $(0,1)$. By taking derivatives we see that $f_1(z)$ is increasing on $(0,0.5]$, with a maximum  of $2^{-\frac{4}{\kappa+2} - 1}$ at $z = 0.5$. The function $f_2(z)$ is decreasing on $(0,0.5]$ with a supremum of  $e^{-\frac{1}{\kappa+2}}$ as $z \leftarrow 0$. Hence when $\kappa = 0.2$ we have $1 - f_1(z) - f_2(z) > 0.2$ and so 
\begin{equation}\label{eqn:pesky}
\frac{ \varphi_\ell(k) }{ \Psi_{x(\ell)}(k)}  \geq   1/5,
 \end{equation}
for all $x(\ell) \leq k \leq n-1$.

To complete the proof, we consider the $k$ such that $0 \leq k \leq x(\ell)$.  Recall that in this range, $\Psi_{x(\ell)}(k)=k$.
We have $\varphi_{\ell}(0)=0$ and $\varphi_{\ell}(1)=1$. Since $\varphi_\ell$ is concave, $\varphi_{\ell}(k) \leq k$ for all $k \geq 1$, establishing the upper bound on $\varphi_{\ell}$.
Furthermore, since 
\(\varphi_{\ell}(x(\ell)) \geq x(\ell)/5\)
by \eqref{eqn:pesky}
and $\varphi_{\ell}(k)$ is concave we have that the graph of $\varphi_{\ell}(k)$ on $k\in[0,x(\ell)]$ lies above the line between $(0,0)$ and $(x(\ell),x(\ell)/5)$. Hence
\[\frac{1}{5} \psi_{x(\ell)} \leq \varphi_{\ell}(x(\ell)) \leq \psi_{x(\ell)},\]
for all $0 \leq k \leq x(\ell)$, and therefore for all $0 \leq k \leq n-1$.
\end{proof}

As a direct corollary we obtain a general (if perhaps not very tight) bound of the distortion required to embed any diversity.
\begin{cor}
	Let $(X,\delta)$ be any finite diversity and let $\gamma = \max\{\frac{\delta(A')}{\delta(A)} : |A| = |A'| > 1\}$. Then $(X,\delta)$ can be embedded in an $L_1$ diversity with distortion at most $O(\gamma)$.
\end{cor}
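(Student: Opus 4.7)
The plan is to obtain the corollary as a direct two-step composition of results already established in the paper, so that essentially no new work is required beyond tracking how distortions multiply. Given a finite diversity $(X,\delta)$ with skewness $\gamma$, I would first invoke Proposition~\ref{skew} to produce a symmetric diversity $(X,\hdelta)$ on the same ground set together with the identity map realizing the embedding $(X,\delta) \hookrightarrow (X,\hdelta)$ with distortion $\gamma$. That is, there are constants $c_1,c_2>0$ with $c_1 c_2 = \gamma$ such that $\frac{1}{c_1}\delta(A) \leq \hdelta(A) \leq c_2 \delta(A)$ for every finite $A \subseteq X$.

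Next I would apply Theorem~\ref{thm:EmbedSymm} to $(X,\hdelta)$: since it is a finite symmetric diversity, there exists an embedding $\phi : X \to \Re^k$ (for some $k$) into an $L_1$ diversity $(\Re^k,\delta_1)$ and constants $c_1',c_2' > 0$ with $c_1' c_2' = K$ (a fixed absolute constant, independent of $n = |X|$) such that $\frac{1}{c_1'}\hdelta(A) \leq \delta_1(\phi(A)) \leq c_2' \hdelta(A)$ for every finite $A \subseteq X$.

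Composing the two inequalities yields
\[
\frac{1}{c_1 c_1'}\, \delta(A) \;\leq\; \delta_1(\phi(A)) \;\leq\; c_2 c_2'\, \delta(A),
\]
for all finite $A \subseteq X$, so the composite map $\phi$ embeds $(X,\delta)$ into the $L_1$ diversity with distortion at most $(c_1 c_1')(c_2 c_2') = K\gamma = O(\gamma)$, which is the claimed bound.

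There is no real obstacle here; the only item to be careful about is the bookkeeping of the two distortion factorizations and the verification that the composite factorization still obeys the definition of distortion from the paper (which requires $c = c_1 c_2$ for positive $c_1,c_2$). Since the product of the two factorizations is itself a factorization of the form $(c_1 c_1')(c_2 c_2')$ with both factors positive, this is automatic, and the constant $K$ absorbed into the $O(\cdot)$ is precisely the universal constant produced by the proof of Theorem~\ref{thm:EmbedSymm}.
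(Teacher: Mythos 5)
Your proposal is correct and follows exactly the same route as the paper: apply Proposition~\ref{skew} to pass to a symmetric diversity with distortion $\gamma$, then apply Theorem~\ref{thm:EmbedSymm} to embed that into $L_1$ with constant distortion, and multiply the distortions. The extra bookkeeping you include about the factorizations $c=c_1c_2$ is a harmless (and slightly more careful) elaboration of what the paper leaves implicit.
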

\begin{proof}
By Proposition~\ref{skew}, $(X,\delta)$ can be embedded in a symmetric diversity with distortion $\gamma$, and by Theorem~\ref{thm:EmbedSymm} this symmetric diversity can be embedded in $L_1$ with constant distortion.
\end{proof}

%
%
%


\bibliographystyle{abbrvnat}
\bibliography{SymmetricDiversities}

\end{document}